\newtheorem{theorem}{Theorem}
\newtheorem{lemma}{Lemma}
\newtheorem{corollary}{Corollary}
\newtheorem{definition}{Definition}
\newtheorem{remark}{Remark}
\newtheorem{example}{Example}
\newcommand{\F}{\mathbb F}
\newcommand{\N}{\mathbb N}
\newcommand{\Char}{\mathrm{char}}
\newcommand{\GL}{\mathrm{GL}}
\newcommand{\Tr}{\mathrm{Tr}}
\newcommand{\ord}{\mathrm{ord}}
\begin{document}

\begin{frontmatter}



\title{The action of $\GL_2(\F_q)$ on irreducible polynomials over $\F_q$}

\author{Lucas Reis}
\ead{lucasreismat@gmail.com}

\address{Departamento de Matem\'{a}tica, Universidade Federal de Minas Gerais, UFMG, Belo Horizonte, MG, 30123-970, Brazil}


\begin{abstract}
Let $\F_q$ be the finite field with $q$ elements, $p=\Char  \F_q$. The group $\GL_2(\F_q)$ acts naturally in the set of irreducible polynomials over $\F_q$ of degree at least $2$. In this paper we are interested in the characterization and number of the irreducible polynomials that are fixed by the elements of a subgroup $H$ of $\GL_2(\F_q)$. We make a complete characterization of the fixed polynomials in the case when $H$ has only elements of the form $\left(\begin{matrix}
1&b\\
0&1
\end{matrix}\right)$, corresponding to translations $x\mapsto x+b$ and, as a consequence, the case when $H$ is a $p-$subgroup of $\GL_2(\F_q)$.
This paper also contains alternative solutions for the cases when $H$ is generated by an element of the form $\left(\begin{matrix}
a&0\\
0&1
\end{matrix}\right)$,
 obtained by Garefalakis (2010) \cite{gfq} and $H=\rm{PGL}$$_2(\F_q)$, obtained by Stichtenoth and Topuzoglu (2011) \cite{general}.
\end{abstract}

\begin{keyword}
{finite fields, irreducible polynomials, group action}

2010 MSC: 12E20 \sep 11T55
\end{keyword}
\end{frontmatter}





\section{Introduction}
Let $\F_q$ be the finite field with $q$ elements, $p=\Char \F_q$. As it was noticed in \cite{gfq} and \cite{general}, there is a natural action of the group $\GL_2(\F_q)$ on the set $I$ of irreducible polynomials of degree at leat $2$ in $\F_q[x]$. Namely, given
$A=
\left(\begin{matrix}
a&b\\
c&d
\end{matrix}\right)
$
in $\GL_2(\F_q)$ and $f(x)\in I$ of degree $n$ we can define $$A\circ f=(cx+d)^nf\left( \frac{ax+b}{cx+d}\right).$$
It can be verified that $f$ and $A\circ f$ have the same degree and $A\circ( B\circ f)=(AB)\circ f$ for all $A, B\in \GL_2(\F_q)$. From this definition, two interesting theoretical questions arise:
\begin{enumerate}[a)]
\item Given a subgroup $H$ of  $\GL_2(\F_q)$, which elements $f\in I$ are fixed by $H$, i.e., $A\circ f=f$ for all $A\in H$?
\item How many fixed elements exist?
\end{enumerate}

In \cite{general}, the authors gave a complete characterization of the elements $f\in I$ that are fixed by $H=\langle A\rangle$, where $A$ is any element of $\GL_2(\F_q)$. Earlier, the same characterization was given in \cite{gfq} for the special cases
$A=\left(\begin{matrix}
a&0\\
0&1
\end{matrix}\right)$ or $\left(\begin{matrix}
1&b\\
0&1
\end{matrix}\right)$, corresponding to the homothety $x\mapsto ax$ and the translation $x\mapsto x+b$, respectively, including enumeration formulas. Combining those results, in \cite{gfq} the author also obtained enumeration formulas for the case when $A$ is an upper triangular matrix in $\GL_2(\F_q)$.

In this paper we are also going to consider only translations and homotheties but with an alternative characterization of the invariant polynomials. Section 2 is devoted to extend the results of Garefalakis \cite{gfq} for translations, considering now any family of translations $\{x\mapsto x+s; s\in S\}$ where $S\subset \F_q$. Section 3 includes alternative proofs of the enumeration formula in \cite{gfq} for homotheties and the characterization of all irreducible polynomials that are invariant under the action of the whole group $\rm{PGL}$$_2(\F_q)$, [\cite{general}, Proposition 4.8]. In Section 4, using the canonical conjugation in $\GL_2(\F_q)$ and the results of Section 2,  we count the irreducible polynomials that are fixed by any given $p-$subgroup $H$ of $\GL_2(\F_q)$.

\section{S-Translation Invariant Polynomials}

Throughout this section $\F_q$ denotes the finite field with $q$ elements, where $q=p^k$ is a prime power. If $S\subset \F_q$, we say that $f(x)\in \F_q[x]$ is an $S$-translation invariant polynomial if $f(x+s)=f(x)$ for all $s\in S$. 

\begin{example}
$f(x)=x^q-x$ is an $\F_q$-translation invariant polynomial.
\end{example}

Let $C_S(n)$ denote the set of all $S$-translation invariant monic irreducible polynomials of degree $n$ over $\F_q$.
Suppose that $a, b\in \F_q^{*}$ and $f(x)\in \F_q[x]$ is a polynomial such that $f(x+a)=f(x)$ and $f(x+b)=f(x)$. Clearly $f(x+0)=f(x)$ and then, by induction, we have that 
$$f(x+ia+jb)=f(x)$$ for all $i, j\in \F_p$. In particular, if $S'\subset \F_q$ is the $\F_p-$vector space generated by $S$, then $C_{S'}(n)\supset C_S(n)$. The converse inclusion is obviously true and so $C_{S'}(n)=C_S(n)$.
From now, $S\subset \F_q$ will denote an $\F_p-$vector space of dimension $r>0$.  

\begin{remark}\label{obs} If $f(x)\in \F_q[x]$ is an $S$-translation invariant polynomial and $a\in \F_q^*$, then $g(x)=a^{n}f(a^{-1}x)$ is an $S'$-translation invariant polynomial, where $S'=\{as | s\in S\}$. Moreover, if $S$ is an $\F_p-$vector space then so is $S'$.
\end{remark}
The observations above lead us to the following definition:

\begin{definition}
Let $S, S'\subset\F_q$ be $\F_p$-vector spaces. We say that $S$ and $S'$ are $\F_q-$\textit{linearly equivalent} and write $S\sim_{\F_q} S'$ if there exists $a\in \F_q^*$ such that $S'=\{as| s\in S\}$.
\end{definition}

It is immediate from definition that the relation $\sim_{\F_q}$ is an equivalence relation. Moreover, this relation gives an interesting invariant:

\begin{lemma}\label{igual}
If $S\sim_{\F_q} S'$, then $|C_S(n)|=|C_{S'}(n)|$ for all $n\in \N$.
\end{lemma}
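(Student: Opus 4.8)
The plan is to promote Remark~\ref{obs} into an explicit bijection between $C_S(n)$ and $C_{S'}(n)$. Fix $a\in\F_q^*$ witnessing $S\sim_{\F_q}S'$, so that $S'=\{as\mid s\in S\}$, and define
$$\phi\colon C_S(n)\to C_{S'}(n),\qquad \phi(f)(x)=a^{n}f(a^{-1}x).$$
First I would check that $\phi$ is well defined, i.e.\ that $\phi(f)$ genuinely lands in $C_{S'}(n)$ rather than merely among the $S'$-translation invariant polynomials. Writing $f(x)=\sum_{i=0}^{n}c_ix^i$ with $c_n=1$, one has $\phi(f)(x)=\sum_{i=0}^{n}c_ia^{n-i}x^i$, which is a polynomial of degree $n$ with leading coefficient $c_n=1$, hence monic of degree $n$. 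It is $S'$-translation invariant precisely by Remark~\ref{obs}. Finally it is irreducible: a factorization $\phi(f)(x)=g_1(x)g_2(x)$ with $\deg g_j\ge 1$ would, after the substitution $x\mapsto ax$, yield $a^{n}f(x)=g_1(ax)g_2(ax)$, a nontrivial factorization of $f$ — contradiction. (Equivalently, $\phi(f)$ is a scalar multiple of $\left(\begin{matrix}a^{-1}&0\\0&1\end{matrix}\right)\circ f$, and the $\GL_2(\F_q)$-action preserves $I$.)

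Next I would introduce $\psi\colon C_{S'}(n)\to C_S(n)$ by $\psi(h)(x)=a^{-n}h(ax)$. Since $\sim_{\F_q}$ is symmetric (with $a^{-1}$ witnessing $S'\sim_{\F_q}S$), the same three verifications — monic, correct degree, irreducible, and $S$-translation invariant via Remark~\ref{obs} — show $\psi$ is well defined. A one-line computation then gives
$$\psi(\phi(f))(x)=a^{-n}\,\phi(f)(ax)=a^{-n}a^{n}f\bigl(a^{-1}ax\bigr)=f(x),$$
and symmetrically $\phi(\psi(h))=h$, so $\phi$ and $\psi$ are mutually inverse. Hence $\phi$ is a bijection and $|C_S(n)|=|C_{S'}(n)|$ for every $n\in\N$.

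I do not expect a genuine obstacle: the content is entirely contained in Remark~\ref{obs} together with the observation that the substitution $x\mapsto a^{-1}x$ is invertible. The only points needing a little care are the choice of the scalar normalizations $a^{n}$ and $a^{-n}$ (so that $\phi(f)$ stays monic and $\psi\circ\phi=\mathrm{id}$) and the remark that the substitution preserves irreducibility, both of which are routine once the maps are written down.
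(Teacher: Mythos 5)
Your proof is correct and follows essentially the same route as the paper: both use the map $f(x)\mapsto a^{n}f(a^{-1}x)$ together with Remark~\ref{obs}. The only cosmetic difference is that you exhibit the explicit inverse $h(x)\mapsto a^{-n}h(ax)$, whereas the paper notes injectivity and then invokes the symmetry of $\sim_{\F_q}$ to get the reverse inequality.
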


\begin{proof}

If $S\sim_{\F_q}S'$, then $S'=\{as|s\in S\}$ for some $a\in \F_q^*$. Let $I_n(q)$ be the set of all monic irreducible polynomials of degree $n$ over $\F_q$ and 

$$\begin{matrix}
\tau_{a}:&I_n(q)&\to& I_n(q)\\
&f(x)&\mapsto &a^nf(a^{-1}x).
\end{matrix}$$

Clearly $\tau_a$ is well defined and is one to one. From Remark \ref{obs} we know that if $f(x)\in C_S(n)$ then $\tau_{a}(f(x))\in C_{S'}(n)$.  Therefore we have that $|C_S(n)|\le |C_{S'}(n)|$. Since the relation $\sim_{\F_q}$ is symmetric, it follows that $|C_{S'}(n)|\le |C_{S}(n)|$ and this completes the proof.
\end{proof}

The following theorem gives a characterization of the $S$-translation invariant polynomials over the finite field $\F_q$.

\begin{theorem}\label{composicao}
Let $S\subset \F_q$ be an $\F_p-$vector space of dimension $r>0$ and $$P_S(x): =\prod_{s\in S}(x-s).$$ Then $g(x)\in \F_q[x]$ is an $S$-translation invariant polynomial if, and only if, there exists some polynomial $f(x)\in \F_q[x]$ such that 
$g(x)=f(P_S(x))$. In particular, any $S$-translation invariant polynomial has degree divisible by $\deg P_S(x)=|S|=p^r$.
\end{theorem}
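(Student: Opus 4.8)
The plan is to prove the nontrivial (``only if'') direction by induction on $\deg g$, using repeated Euclidean division by $P_S(x)$; the converse direction and the divisibility claim will then be immediate.

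First I would record the one structural fact about $P_S$ that is needed: since $S$ is a subgroup of $(\F_q,+)$, for every $s\in S$ the map $t\mapsto t-s$ permutes $S$, so
\[
P_S(x+s)=\prod_{t\in S}\bigl(x-(t-s)\bigr)=\prod_{u\in S}(x-u)=P_S(x).
\]
Hence $P_S$ is itself $S$-translation invariant, and therefore so is $f(P_S(x))$ for every $f\in\F_q[x]$; this settles the ``if'' direction. It also shows that $\deg g=\deg f\cdot\deg P_S=p^r\deg f$ whenever $g=f(P_S(x))\neq 0$, which gives the final ``in particular'' assertion.

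For the ``only if'' direction, suppose $g$ is $S$-translation invariant. Since $P_S$ is monic of degree $d:=|S|=p^r$, Euclidean division gives $g(x)=Q(x)P_S(x)+\rho(x)$ with $\deg\rho<d$ and $Q,\rho\in\F_q[x]$. I would then argue that both $Q$ and $\rho$ are $S$-translation invariant: for $s\in S$, using the $S$-invariance of $g$ and of $P_S$ we get $Q(x+s)P_S(x)+\rho(x+s)=g(x+s)=g(x)=Q(x)P_S(x)+\rho(x)$, and since $\deg\rho(x+s)=\deg\rho(x)<d$ these are two Euclidean divisions of $g$ by $P_S$; by uniqueness, $Q(x+s)=Q(x)$ and $\rho(x+s)=\rho(x)$. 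Next, an $S$-translation invariant polynomial of degree $<d$ is necessarily constant: evaluating $\rho(x+s)=\rho(x)$ at $x=0$ shows $\rho(s)=\rho(0)$ for all $s\in S$, so $\rho(x)-\rho(0)$ has at least $d=|S|$ roots while having degree $<d$, forcing $\rho$ to be a constant. Finally, $Q$ is $S$-translation invariant with $\deg Q<\deg g$ (when $Q\neq 0$, since then $\deg g=\deg Q+d$), so by the induction hypothesis $Q(x)=h(P_S(x))$ for some $h\in\F_q[x]$; hence $g(x)=h(P_S(x))P_S(x)+\rho=f(P_S(x))$ with $f(y):=y\,h(y)+\rho$. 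The base case $\deg g<d$ is just the instance $Q=0$, where $g=\rho$ is constant and $f$ is that constant (with $f=0$ if $g=0$).

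The only step that needs genuine care is the appeal to uniqueness of Euclidean division, which is what lets us transfer $S$-invariance from $g$ to its quotient and remainder; everything else is routine. One could equivalently phrase the whole argument through the $P_S$-adic expansion $g=\sum_i a_i(x)P_S(x)^i$ with $\deg a_i<d$, showing each $a_i$ is $S$-invariant hence constant, but the inductive formulation above avoids having to set that expansion up.
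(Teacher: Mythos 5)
Your proof is correct and follows essentially the same route as the paper: induction on $\deg g$, writing $g$ as $P_S$ times an $S$-invariant quotient of smaller degree plus a constant remainder, and invoking $P_S(x+s)=P_S(x)$. The only cosmetic difference is that the paper identifies the remainder directly as $g(0)$ (since $g(x)-g(0)$ vanishes on all of $S$), whereas you reach the same conclusion via uniqueness of Euclidean division.
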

\begin{proof}
Assume that $g(x)$ is an $S$-translation invariant polynomial over $\F_q$. We proceed by induction on $n=\deg g(x)$. If $g(x)$ is constant then there is nothing to prove. Suppose that the statement is true for all polynomials of degree at most $k$ and let $g(x)$ be an $S$-translation invariant polynomial of degree $k+1$. We have $g(0)=g(s)$ for all $s\in S$ and so the polynomial $g(x)-g(0)$ has degree $k+1>0$ and vanishes at $s$ for all $s\in S$. In particular we have that 
\begin{align}\label{um} g(x)-g(0)=P_S(x) G(x)\end{align}
 for some non-zero polynomial $G(x)\in \F_q[x]$. Since $g(x+s)-g(0)=g(x)-g(0)$ and $P_S(x+s)=P_S(x)$ for all $s\in S$, by equation \eqref{um} it follows that $G(x)$ is an $S$-translation invariant polynomial over $\F_q$ and $\deg G(x)< \deg g(x)$. By the induction hypothesis we have that $G(x)=F(P_S(x))$ for some $F(x)\in \F_q[x]$. Therefore $g(x)=P_S(x) F(P_S(x))+g(0)$ and so $g(x)=f(P_S(x))$ where $f(x)=xF(x)+g(0)$. The converse is obviously true.
\end{proof}

If $1, a\in S$ where $a\not\in \F_p$, we have the following:
\begin{corollary}\label{cor}
Let $S\subset \F_q$ be an $\F_p-$vector space such that  $1, a\in S$ where $a\not\in \F_p$. Then any $S$-translation invariant polynomial in $\F_q[x]$ is of the form
$$f(x^{p^2}-x^p(1+(a-a^p)^{p-1})+x(a-a^p)^{p-1})$$
for some polynomial $f(x)\in \F_q[x]$.
\end{corollary}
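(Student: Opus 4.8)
The plan is to reduce the statement to Theorem \ref{composicao} applied to the two‑dimensional subspace spanned by $1$ and $a$, and then to identify the resulting polynomial explicitly. Since $a\notin\F_p$, the elements $1$ and $a$ are linearly independent over $\F_p$, so $T:=\{i+ja\mid i,j\in\F_p\}$ is an $\F_p$‑subspace of $S$ of dimension $2$. If $g(x)\in\F_q[x]$ is $S$‑translation invariant, then in particular $g(x+t)=g(x)$ for every $t\in T\subset S$, so $g$ is $T$‑translation invariant; by Theorem \ref{composicao} there is $f(x)\in\F_q[x]$ with $g(x)=f(P_T(x))$. It therefore remains only to verify that $P_T(x)$ equals the displayed polynomial $x^{p^2}-x^p(1+(a-a^p)^{p-1})+x(a-a^p)^{p-1}$.

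For this I would compute $P_T(x)=\prod_{t\in T}(x-t)$ by slicing $T$ along the cosets of $\F_p$. Writing $T=\bigcup_{j\in\F_p}(\F_p+ja)$, one gets $\prod_{i\in\F_p}(x-i-ja)=(x-ja)^p-(x-ja)=(x^p-x)-j(a^p-a)$, using $j^p=j$ for $j\in\F_p$. Hence $P_T(x)=\prod_{j\in\F_p}\bigl((x^p-x)-j(a^p-a)\bigr)$. Applying the elementary identity $\prod_{j\in\F_p}(y-jc)=y^p-c^{p-1}y$, valid for $c\in\F_q^*$, with $y=x^p-x$ and $c=a^p-a$ (which is nonzero precisely because $a\notin\F_p$), yields $P_T(x)=(x^p-x)^p-(a^p-a)^{p-1}(x^p-x)$. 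Expanding $(x^p-x)^p=x^{p^2}-x^p$ and using that $(a^p-a)^{p-1}=(a-a^p)^{p-1}$, since $(-1)^{p-1}=1$ for every prime $p$, gives exactly the claimed expression and finishes the proof.

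I do not expect a genuine obstacle here: the only points needing a little care are the coset computation of $P_T(x)$ and the uniform simplification of the sign $(-1)^{p-1}$ (in particular covering $p=2$), both routine. It is perhaps worth remarking that the statement records only a necessary condition unless $S=T$; this is all that is used, since the converse direction is not asserted.
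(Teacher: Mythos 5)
Your proposal is correct and follows essentially the same route as the paper: reduce to the two-dimensional subspace $\langle 1,a\rangle_{\F_p}$ and apply Theorem \ref{composicao}, then identify $P_T(x)$ with the displayed polynomial. The paper simply asserts the product formula for $P_{S'}(x)$, whereas you verify it via the coset decomposition and the identity $\prod_{j\in\F_p}(y-jc)=y^p-c^{p-1}y$; this is the same argument with the routine computation written out.
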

\begin{proof}
Since $1, a\in S$ and $a\not\in \F_p$, we have that any $S$-translation invariant polynomial is also a $S'$-translation invariant where $S'=\langle 1, a\rangle_{\F_p}=\{j+ai| i, j\in\F_p\}$. In addition, notice that 
$$P_{S'}(x)=\prod_{0\le i, j\le p-1}(x-j-ai)=x^{p^2}-x^p(1+(a-a^p)^{p-1})+x(a-a^p)^{p-1}.$$ 
\end{proof}

The main result of this paper is the following:

\begin{theorem}\label{main}
Let $\F_q$ be the finite field with $q=p^k$ elements and $S\in \F_q$ be an $\F_p-$vector space of dimension $r>0$.
\begin{enumerate}[a)]
\item If $r>1$ then $|C_S(n)|=0$ for all $n$.
\item If $r=1$ and $n$ is not divisible by $p$ then $|C_S(n)|=0$.
\item If $r=1$ and $n=pm$, then $$|C_S(n)|=\frac{p-1}{pm}\sum_{d|m\atop \gcd(d, p)=1}q^{m/d}\mu(d).$$
\end{enumerate}
\end{theorem}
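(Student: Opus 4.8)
The plan is to reduce everything, via Theorem~\ref{composicao}, to polynomials of the form $f(P_S(x))$, and then to use two properties of $P_S$: it is invariant under translation by elements of $S$ and vanishes on $S\subset\F_q$; and when $r=1$ and $S=\F_p$ it equals $x^p-x$, so the classical Artin--Schreier irreducibility criterion becomes available. For part~(a): if $p^r\nmid n$ then $C_S(n)=\emptyset$ already by Theorem~\ref{composicao}, so assume $p^r\mid n$ and take $g\in C_S(n)$, written as $g(x)=f(P_S(x))$ with $f$ necessarily irreducible of degree $m:=n/p^r$. Let $\alpha$ be a root of $g$ and $\beta=P_S(\alpha)$; then $\F_q(\beta)=\F_{q^m}$ and $\F_q(\alpha)=\F_{q^n}$, so $P_S(x)-\beta$ is the minimal polynomial of $\alpha$ over $\F_{q^m}$ and is irreducible there, of degree $p^r$. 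Reindexing the product defining $P_S$ shows $P_S(x+s)=P_S(x)$ for all $s\in S$; since $P_S$ also vanishes on $S$, the roots of $P_S(x)-\beta$ are exactly $\alpha+S$. Hence if $\sigma$ generates $\mathrm{Gal}(\F_{q^n}/\F_{q^m})$ then $\sigma(\alpha)=\alpha+s_1$ for some $s_1\in S$, and because $\sigma$ fixes $S\subseteq\F_q$ pointwise, $\sigma^j(\alpha)=\alpha+js_1$ for every $j$; thus the $\sigma$-orbit of $\alpha$ has at most $p$ elements, whereas irreducibility of $P_S(x)-\beta$ forces it to have $p^r$ elements, giving $p^r\le p$, a contradiction when $r>1$. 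Part~(b) is immediate: by Theorem~\ref{composicao} an $S$-translation invariant polynomial has degree divisible by $p^r=p$, so $C_S(n)=\emptyset$ whenever $p\nmid n$.

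For part~(c), first apply Lemma~\ref{igual}: since $\dim_{\F_p}S=1$ we have $S\sim_{\F_q}\F_p$, hence $|C_S(pm)|=|C_{\F_p}(pm)|$ and we may take $S=\F_p$, for which $P_S(x)=x^p-x$. By Theorem~\ref{composicao} together with the Artin--Schreier criterion, a monic $g$ lies in $C_{\F_p}(pm)$ if and only if $g=f(x^p-x)$ with $f$ monic irreducible of degree $m$ and $\Tr_{\F_{q^m}/\F_p}(\beta)\ne 0$ for a root $\beta$ of $f$ (a root $\alpha$ of $x^p-x-\beta$ generates $\F_{q^{pm}}$ over $\F_q$ exactly when $[\F_q(\beta):\F_q]=m$ and $x^p-x-\beta$ is irreducible over $\F_{q^m}$, the latter being the condition $\Tr_{\F_{q^m}/\F_p}(\beta)\ne 0$; this is well defined on $f$ since the trace is Frobenius invariant). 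As $f\mapsto f(x^p-x)$ is injective and each monic irreducible $f$ of degree $m$ corresponds to a Frobenius orbit of $m$ degree-$m$ elements of $\F_{q^m}$ on which the trace is constant, we get $|C_{\F_p}(pm)|=A_m/m$ with $A_m:=\#\{\beta\in\F_{q^m}:\deg_{\F_q}\beta=m,\ \Tr_{\F_{q^m}/\F_p}(\beta)\ne 0\}$. To evaluate $A_m$ I would partition $\F_{q^m}$ by $\F_q$-degree: a $\beta$ of degree $e\mid m$ satisfies $\Tr_{\F_{q^m}/\F_p}(\beta)=(m/e)\Tr_{\F_{q^e}/\F_p}(\beta)$ (with $m/e$ read modulo $p$), which is nonzero precisely when $p\nmid m/e$ and $\Tr_{\F_{q^e}/\F_p}(\beta)\ne 0$. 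Writing $m=p^am'$ with $p\nmid m'$, the divisors $e\mid m$ with $p\nmid m/e$ are exactly those of the form $p^ae'$ with $e'\mid m'$, so
\[
\tfrac{p-1}{p}\,q^{m}=\#\{\beta\in\F_{q^m}:\Tr_{\F_{q^m}/\F_p}(\beta)\ne 0\}=\sum_{e'\mid m'}A_{p^a e'},
\]
and the same identity holds with $m'$ replaced by any divisor of $m'$. Möbius inversion over the divisors of $m'$ then gives $A_m=\tfrac{p-1}{p}\sum_{d\mid m'}\mu(d)q^{m/d}=\tfrac{p-1}{p}\sum_{d\mid m,\,\gcd(d,p)=1}\mu(d)q^{m/d}$, and dividing by $m$ yields the stated formula.

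The step I expect to be the main obstacle is the bookkeeping in part~(c): identifying precisely when the iterated trace $\Tr_{\F_{q^m}/\F_p}$ vanishes identically on a subfield $\F_{q^e}$ --- namely when $p\mid m/e$ --- since this is exactly what introduces the coprimality condition $\gcd(d,p)=1$ into the Möbius sum and produces the discrepancy between $|C_{\F_p}(pm)|$ and the ordinary count $\frac1m\sum_{d\mid m}\mu(d)q^{m/d}$ of degree-$m$ irreducibles. By contrast, the linearized-polynomial argument in part~(a) and the Artin--Schreier input are routine once set up correctly.
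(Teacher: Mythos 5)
Your proposal is correct, and in parts a) and c) it takes a genuinely different route from the paper. For part a) the paper reduces, via Corollary \ref{cor}, to compositions $f(x^{p^2}-ax^p-bx)$ and then invokes Agou's irreducibility criterion (Lemma \ref{agou}), which kills the case $p>2$ immediately but forces a separate, somewhat delicate trace computation over the three solutions of $\alpha^2+\beta=a$, $\alpha\beta=b$ when $p=2$. Your argument instead observes that if $g=f(P_S(x))$ were irreducible of degree $p^r m$, then $P_S(x)-\beta$ would be irreducible of degree $p^r$ over $\F_{q^m}$ with root set $\alpha+S$, while the cyclic Galois group $\mathrm{Gal}(\F_{q^n}/\F_{q^m})=\langle\sigma\rangle$ satisfies $\sigma^j(\alpha)=\alpha+js_1$, so its orbit has at most $p$ elements; transitivity then forces $p^r\le p$. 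This is self-contained, uniform in $p$, and avoids both Corollary \ref{cor} and the external citation of Agou's theorem. For part c) both you and the paper reduce to $S=\F_p$ and apply the Artin--Schreier criterion (the paper phrases it through the coefficient $a_{m-1}$ via Lemma \ref{russian}, you through $\Tr_{\F_{q^m}/\F_p}(\beta)$ for a root $\beta$ of $f$; these agree by transitivity of the trace). The difference is that the paper then quotes Lemma \ref{count} (Ruskey--Miers--Sawada) for the number of degree-$m$ irreducibles with each fixed nonzero trace and multiplies by the $(p-1)q/p$ admissible trace values, whereas you re-derive the aggregate count $A_m=\frac{p-1}{p}\sum_{d\mid m,\gcd(d,p)=1}\mu(d)q^{m/d}$ from scratch by stratifying $\F_{q^m}$ by $\F_q$-degree and M\"obius-inverting over the divisors of the $p$-free part of $m$; the identity $\Tr_{\F_{q^m}/\F_p}(\beta)=(m/e)\Tr_{\F_{q^e}/\F_p}(\beta)$ for $\deg_{\F_q}\beta=e$ correctly accounts for the coprimality condition on $d$. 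Your version makes the proof self-contained at the cost of some bookkeeping; the paper's is shorter by outsourcing the count. Part b) is identical in both.
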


\subsection{Lemmata}
In order to prove Theorem \ref{main} we have to discuss the irreducibility of the polynomials $f(P_S(x))\in \F_q[x]$. In this direction, Lemmas \ref{agou} and \ref{russian} give some criteria to ensure the irreducibility of special polynomial compositions and Lemma \ref{count} will be useful in the proof of the enumeration formula in Theorem \ref{main}.

\begin{lemma}[\cite{agou}]\label{agou} 
Let $f(x)=x^{n}+Bx^{n-1}+\cdots+c\in \F_q[x]$ be an irreducible polynomial, where $q=p^k$ is a prime power. The polynomial $f(x^{p^{2t}}-ax^{p^t}-bx)$ is also irreducible over $\F_q[x]$  if and only if the following conditions are satisfied:

\begin{enumerate}[a)]
\item $p=2, t=1, n$ is odd and $B\ne 0$,

\item $\gcd(x^3-ax-b, x^{2^k}-x)\ne 1$,

\item $\rm{{Tr}}$$_{L/K}(\beta^{-2}B)=\Tr$$_{L/K}(\alpha^{-2}\beta)=1$, where $L=\F_{2^k}, K=\F_2$ and $\alpha, \beta$ are two elements in $L$ such that $\alpha^2+\beta=a$ and $\alpha\beta=b$.

\end{enumerate}
\end{lemma}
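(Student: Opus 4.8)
The plan is to reduce the irreducibility of the composition to that of an affine linearized polynomial over an extension field, and then to read off the arithmetic conditions from an Artin--Schreier/trace analysis. Write $q_0=p^t$ and $\ell(x)=x^{q_0^2}-ax^{q_0}-bx$, an $\F_{q_0}$-linearized (in particular $\F_p$-additive) polynomial of degree $p^{2t}$; its formal derivative is $-b$, so separability of $\ell(x)-\alpha$ — and hence any hope of irreducibility of $f(\ell(x))$ over the perfect field $\F_q$ — already forces $b\ne 0$.

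First I would establish the organizing reduction: \emph{if $\alpha$ is a root of $f$, so that $\F_q(\alpha)=\F_{q^n}$, then $f(\ell(x))$ is irreducible over $\F_q$ if and only if $\ell(x)-\alpha$ is irreducible over $\F_{q^n}$.} Take a root $\theta$ of $f(\ell(x))$ with $\ell(\theta)=\alpha$; then $\alpha=\ell(\theta)\in \F_q(\theta)$, so $\F_{q^n}\subseteq \F_q(\theta)$ and $[\F_q(\theta):\F_q]=n\cdot[\F_q(\theta):\F_{q^n}]$. Since $\theta$ is a root of the degree-$p^{2t}$ polynomial $\ell(x)-\alpha\in\F_{q^n}[x]$, we get $[\F_q(\theta):\F_{q^n}]\le p^{2t}$, with equality exactly when $\ell(x)-\alpha$ is irreducible over $\F_{q^n}$. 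As $f(\ell(x))$ has degree $np^{2t}$ and is divisible by the minimal polynomial of $\theta$, it is irreducible precisely when $[\F_q(\theta):\F_{q^n}]=p^{2t}$. This turns the whole question into one about a single affine linearized polynomial over $E:=\F_{q^n}$.

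Next I would analyze $\ell(x)-\alpha$ over $E$. Additivity of $\ell$ means its roots form a coset $\theta_0+\ker\ell$, where $\ker\ell$ is an $\F_{q_0}$-vector space of $q_0$-dimension at most $2$ cut out by $\ell(x)/x=x^{q_0^2-1}-ax^{q_0-1}-b$, and the $q$-power Frobenius permutes the roots of $\ell(x)-\alpha$. Irreducibility over $E$ is equivalent to this permutation being a single cycle of full length $p^{2t}$. Running through the possible cycle structures on the coset, the full-length orbit survives only in a very rigid situation: for $p$ odd or $t>1$ the interaction of the $\F_{q_0}$-linear coset structure with Frobenius produces a proper invariant sub-coset and $\ell(x)-\alpha$ splits. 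This is exactly the mechanism that forces condition (a), $p=2$ and $t=1$, and I expect making this dichotomy precise to be the main obstacle of the argument.

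In the surviving case $p=2$, $t=1$ we have $q_0=2$ and $\ell(x)=x^4-ax^2-bx=x\,(x^3-ax-b)$, so $\ker\ell$ is governed by the cubic $x^3-ax-b$. Condition (b), $\gcd(x^3-ax-b,\,x^{2^k}-x)\ne 1$, records when this cubic has a root $\alpha$ in $\F_q=\F_{2^k}$, and such a root gives the factorization $x^3-ax-b=(x+\alpha)(x^2+\alpha x+\beta)$ with $\alpha^2+\beta=a$ and $\alpha\beta=b$ — precisely the parametrization used in (c). The remaining content is to translate irreducibility of $\ell(x)-\alpha$ over $E$ into the nonvanishing of Artin--Schreier obstructions, via the additive Hilbert~90 (an element of a field of characteristic $2$ lies in the image of $y\mapsto y^2+y$ iff its absolute trace vanishes). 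Descending these obstructions through $\F_2\subset\F_{2^k}\subset E$ and using that $\Tr_{E/\F_{2^k}}(c)=nc$ for $c\in\F_{2^k}$ — which is why $n$ must be odd for the obstruction not to collapse, and why $B\ne 0$ is needed — collapses the two relative traces to the absolute-trace conditions $\Tr_{L/K}(\beta^{-2}B)=\Tr_{L/K}(\alpha^{-2}\beta)=1$ with $L=\F_{2^k}$, $K=\F_2$. The delicate bookkeeping in this descent, together with the cycle-structure dichotomy that isolates $p=2,\,t=1$, is where the real work lies.
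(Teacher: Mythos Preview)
The paper does not prove this lemma at all: it is quoted verbatim as a result of Agou \cite{agou} and used as a black box in the proof of Theorem~\ref{main}. So there is no ``paper's own proof'' to compare your attempt against.

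As for your outline on its own terms: the reduction step is correct and standard --- $f(\ell(x))$ is irreducible over $\F_q$ iff $\ell(x)-\alpha$ is irreducible over $\F_{q^n}$ for a root $\alpha$ of $f$ --- and the idea of analyzing the Frobenius action on the coset $\theta_0+\ker\ell$ is the right one. But what you have written is a plan rather than a proof: the two places you yourself flag as ``the main obstacle'' and ``where the real work lies'' (the cycle-structure dichotomy forcing $p=2,\ t=1$, and the Artin--Schreier descent producing the exact trace conditions $\Tr_{L/K}(\beta^{-2}B)=\Tr_{L/K}(\alpha^{-2}\beta)=1$) are precisely the substance of Agou's argument, and you have not carried them out. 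In particular, your heuristic that for $p$ odd or $t>1$ ``the interaction of the $\F_{q_0}$-linear coset structure with Frobenius produces a proper invariant sub-coset'' is plausible but not an argument; one has to actually exhibit the intermediate field or the nontrivial factor. If you intend to supply a self-contained proof rather than cite \cite{agou}, those two steps need to be written out in full.
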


\begin{lemma}[\cite{LiNi}, Theorem 3.82]\label{russian}
Let $f(x)=x^{n}+a_{n-1}x^{n-1}+\cdots+c\in \F_q[x]$ be an irreducible polynomial, $p=\Char {\F_q}$ and $b\in \F_q$. The polynomial $f(x^p-x-b)$ is also irreducible over $\F_q[x]$  if and only if $\Tr$$_{L/K}(nb-a_{n-1})\ne 0$ where $L=\F_q$ and $K=\F_p$.
\end{lemma}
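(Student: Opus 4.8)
\section*{Proof proposal for Lemma \ref{russian}}

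The plan is to analyze the roots of $g(x):=f(x^p-x-b)$ through Artin--Schreier theory over an intermediate finite field. First I would fix a root $\alpha$ of $f$ in an algebraic closure of $\F_q$, so that $\F_q(\alpha)=\F_{q^n}$ because $f$ is irreducible of degree $n$. Since $g$ is monic of degree $pn$ and $g(\beta)=f(\beta^p-\beta-b)$, a root $\beta$ of $g$ lying above $\alpha$ is precisely a solution of the Artin--Schreier equation $\beta^p-\beta=\alpha+b$ over $\F_{q^n}$. As $b\in\F_q$, we have $\F_q(\beta^p-\beta)=\F_q(\alpha)=\F_{q^n}$, hence $\F_{q^n}\subseteq\F_q(\beta)$ and the only possibilities are $[\F_q(\beta):\F_q]=n$ or $[\F_q(\beta):\F_q]=pn$. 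The reduction I would record here is the standard one: $g$ is irreducible if and only if one (equivalently every) such $\beta$ satisfies $[\F_q(\beta):\F_q]=pn$, since $g$ is monic of degree $pn$ and vanishes at $\beta$, so it coincides with the minimal polynomial of $\beta$ exactly when that minimal polynomial has degree $pn$.

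Next I would invoke Artin--Schreier theory over $\F_{q^n}=\F_{p^{kn}}$: for $c\in\F_{q^n}$ the polynomial $x^p-x-c$ is either irreducible or splits completely (if $\gamma$ is one root, then $\gamma+j$ for $j\in\F_p$ exhaust all roots), and it splits completely precisely when $c$ lies in the image of the additive map $\wp(y)=y^p-y$. Over a finite field this image equals the kernel of $\Tr_{\F_{q^n}/\F_p}$: both are $\F_p$-subspaces of codimension one, and $\Tr_{\F_{q^n}/\F_p}\circ\wp=0$ because $\Tr_{\F_{q^n}/\F_p}(y^p)=\Tr_{\F_{q^n}/\F_p}(y)$, giving the inclusion and hence the equality. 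Thus $x^p-x-c$ is irreducible over $\F_{q^n}$ if and only if $\Tr_{\F_{q^n}/\F_p}(c)\ne 0$. Applying this with $c=\alpha+b$, the extension $\F_q(\beta)/\F_{q^n}$ has degree $p$ exactly when $\Tr_{\F_{q^n}/\F_p}(\alpha+b)\ne 0$ and degree $1$ otherwise; combining with the first step shows that $g$ is irreducible if and only if $\Tr_{\F_{q^n}/\F_p}(\alpha+b)\ne 0$.

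The final step is to rewrite this trace condition using the coefficients of $f$. By transitivity $\Tr_{\F_{q^n}/\F_p}=\Tr_{\F_q/\F_p}\circ\Tr_{\F_{q^n}/\F_q}$ and linearity I would compute $\Tr_{\F_{q^n}/\F_q}(\alpha+b)=\Tr_{\F_{q^n}/\F_q}(\alpha)+nb$, since $b\in\F_q$ forces $\Tr_{\F_{q^n}/\F_q}(b)=nb$. The conjugates of $\alpha$ are exactly the roots of $f$, so $\Tr_{\F_{q^n}/\F_q}(\alpha)=-a_{n-1}$ by Vieta's formulas, whence $\Tr_{\F_{q^n}/\F_p}(\alpha+b)=\Tr_{\F_q/\F_p}(nb-a_{n-1})$, and the criterion becomes $\Tr_{\F_q/\F_p}(nb-a_{n-1})\ne 0$ as claimed. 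I expect the main obstacle to be the clean identification $\mathrm{image}(\wp)=\ker(\Tr_{\F_{q^n}/\F_p})$ and the accompanying ``irreducible or split completely'' dichotomy, since this is what pins the extension degree to exactly $n$ or $pn$; once that is secured the coefficient computation is routine, and the independence of the final condition on the choice of conjugate $\alpha$ is automatic because $\Tr_{\F_{q^n}/\F_p}$ is invariant under Frobenius.
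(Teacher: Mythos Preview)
Your argument is correct: reducing to the Artin--Schreier extension $\F_{q^n}(\beta)/\F_{q^n}$, using the dichotomy that $x^p-x-c$ is irreducible over $\F_{q^n}$ exactly when $\Tr_{\F_{q^n}/\F_p}(c)\ne 0$, and then collapsing the trace via transitivity and Vieta to obtain $\Tr_{\F_q/\F_p}(nb-a_{n-1})$ is exactly the standard route, and every step you outline goes through without issue.

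Note, however, that the paper does \emph{not} supply its own proof of this lemma: it is quoted verbatim as Theorem~3.82 of Lidl--Niederreiter and used as a black box. So there is nothing to compare against; your proposal simply fills in a proof the paper chose to cite rather than reproduce. For what it is worth, the argument in Lidl--Niederreiter proceeds along essentially the same lines you describe (root-counting in the tower $\F_q\subset\F_{q^n}\subset\F_{q^n}(\beta)$ governed by the absolute trace of $\alpha+b$), so your approach is the expected one.
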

 
\begin{lemma}[\cite{count}]\label{count}
The number of monic irreducible polynomials over $\F_q$ with degree $n$ and a given trace $a\ne 0$ is
$$\frac{1}{qn}\sum_{d|n\atop \gcd(d, p)=1}q^{n/d}\mu(d).$$
\end{lemma}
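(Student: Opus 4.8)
The plan is to count not the irreducible polynomials directly but the field elements they produce, exploiting the fact that each monic irreducible $f$ of degree $n$ has exactly $n$ roots in $\F_{q^n}$, all conjugate and hence all sharing the same value of $\Tr_{\F_{q^n}/\F_q}$, which equals the negative of the coefficient of $x^{n-1}$ in $f$. Writing $I(n)$ for the number of monic irreducibles of degree $n$ with a fixed trace $a\neq 0$, I would first note that this count is independent of the particular nonzero value $a$: the substitution $f(x)\mapsto c^{-n}f(cx)$ is a bijection on the monic irreducibles of degree $n$ that multiplies the trace by $c^{-1}$ (its roots are those of $f$ divided by $c$), so choosing $c$ suitably matches any two nonzero traces. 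It therefore suffices to count the $\alpha\in\F_{q^n}$ of degree exactly $n$ over $\F_q$ with $\Tr_{\F_{q^n}/\F_q}(\alpha)=a$, and divide by $n$.

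Next I would stratify $\F_{q^n}$ by the degree $d\mid n$ of each element. Since $\Tr_{\F_{q^n}/\F_q}$ is $\F_q$-linear and surjective, every fiber has exactly $q^{n-1}$ elements, giving $q^{n-1}=\sum_{d\mid n}D(d)$, where $D(d)$ counts the elements of degree $d$ with trace $a$. For $\alpha$ of degree $d$, transitivity of the trace yields $\Tr_{\F_{q^n}/\F_q}(\alpha)=\tfrac{n}{d}\,\Tr_{\F_{q^d}/\F_q}(\alpha)$, with $\tfrac{n}{d}$ read in $\F_q$. This is the crucial point: if $p\mid\tfrac{n}{d}$ the factor vanishes and every element of degree $d$ has trace $0$, contributing nothing when $a\neq 0$; while if $p\nmid\tfrac{n}{d}$ the condition becomes $\Tr_{\F_{q^d}/\F_q}(\alpha)=\tfrac{d}{n}a$, a nonzero value, so $D(d)=d\,I(d)$ by the value-independence established above. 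Hence, for $a\neq 0$,
$$q^{n-1}=\sum_{d\mid n,\; p\nmid (n/d)}d\,I(d).$$

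Finally I would resolve this restricted divisor sum by isolating the $p$-part. Writing $n=p^{s}m$ with $p\nmid m$, a divisor $d\mid n$ satisfies $p\nmid(n/d)$ exactly when $d=p^{s}d'$ with $d'\mid m$, so the sum runs over $d'\mid m$ and, setting $f(d'):=p^{s}d'\,I(p^{s}d')$, it reads $q^{p^{s}m-1}=\sum_{d'\mid m}f(d')$. As the same identity holds with $m$ replaced by any of its divisors, ordinary M\"obius inversion over the divisors of $m$ gives $f(m)=\sum_{e\mid m}\mu(e)\,q^{n/e-1}$; re-indexing and using that the divisors $e$ of $m$ are precisely the divisors of $n$ coprime to $p$ yields $n\,I(n)=\tfrac1q\sum_{d\mid n,\,\gcd(d,p)=1}\mu(d)\,q^{n/d}$, the claimed formula. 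The only real subtlety is the bookkeeping around the factor $\tfrac{n}{d}\bmod p$: recognizing that it both forces the $\gcd$-restriction and, after separating the power $p^{s}$, converts the awkward restricted sum into a genuine M\"obius sum over the prime-to-$p$ part $m$. I expect this reduction to be the main obstacle; once it is in place the inversion is routine.
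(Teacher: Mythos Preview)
The paper does not prove this lemma at all: it is quoted verbatim from Ruskey--Miers--Sawada and used as a black box in the proof of Theorem~\ref{main}(c). There is therefore nothing to compare your argument against in the paper itself.

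That said, your proof is correct and self-contained. The key steps---independence of the count on the particular nonzero trace via $f(x)\mapsto c^{-n}f(cx)$, stratifying the fibre of $\Tr_{\F_{q^n}/\F_q}$ over $a$ by exact degree, using transitivity of the trace to see that degree-$d$ elements contribute only when $p\nmid n/d$, and then peeling off the $p$-part $p^s$ to turn the restricted sum into a genuine M\"obius convolution over the divisors of the prime-to-$p$ part $m$---are all sound. The only cosmetic point is that the paper's convention for ``trace $a$'' is the coefficient $a_{n-1}$ rather than $-a_{n-1}$, but since you reduce everything to ``some fixed nonzero value'' this does not affect the count.
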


\subsection{Proof of Theorem \ref{main}}
If $S\subset \F_q$ is any $\F_p-$vector space of dimension $r>0$ and $a\in S\setminus \{0\}$, then $a^{-1}S\sim_{\F_q}S$ and $1\in a^{-1}S$. Thus, using Lemma \ref{igual}, we can suppose that $1\in S$.

\begin{enumerate}[a)]
\item Let $n$ be a positive integer and $S\subset \F_q$ be an $\F_p-$vector space of dimension $r>1$ such that $1\in S$. Since $r>1$, there is some element $\gamma\in S\setminus \F_p$. It follows from Corollary \ref{cor} that any $g(x)\in C_S(n)$ is of the form 
$$f(x^{p^2}-x^p(1+(\gamma-\gamma^p)^{p-1})+x(\gamma-\gamma^p)^{p-1}).$$
Therefore, by Lemma \ref{agou}, $g(x)$ is reducible whenever $p>2$. 

If $p=2$, we have that $g(x)=f(x^4-ax^2-bx)$ where $a=\gamma^2+\gamma+1$ and $b=\gamma^2+\gamma$. Now consider the following system of equations:

\begin{equation}\label{two}
\begin{cases} \alpha^2+\beta&=  a \\ \alpha\beta& = b.\end{cases}
\end{equation}
Notice that, for any solution $(\alpha, \beta)$ of system \eqref{two}, $\alpha$ is a root of the equation $y^3-ay+b=0$ and $\beta=a-\alpha^2$. In particular the system \eqref{two} has at most three solutions and since $(\alpha, \beta)=(1, \gamma^2+\gamma), (\gamma, \gamma+1), (\gamma+1, \gamma)$ are solutions these are all of them.

Condition c) of Lemma \ref{agou} says that, for some solution $(\alpha, \beta)$ of \eqref{two}, we have $\Tr_{K/L}(\alpha^{-2}\beta)=1$ where $L=\F_{2^k}$ and $K=\F_2$. In particular, if $(\alpha, \beta)=(1, \gamma^2+\gamma)$ then
$$\Tr_{K/L}(\alpha^{-2}\beta)=\Tr_{K/L}(\gamma^2)+\Tr_{K/L}(\gamma)=\Tr_{K/L}(\gamma)+\Tr_{K/L}(\gamma)=0\ne 1.$$
 The other two solutions of \eqref{two} satisfy $\beta=\alpha+1$ and in these cases we have
$$\Tr_{K/L}(\alpha^{-2}\beta)=\Tr_{K/L}(\alpha^{-1})+\Tr_{K/L}((\alpha^{-1})^2)=\Tr_{K/L}(\alpha^{-1})+\Tr_{K/L}(\alpha^{-1})=0\ne 1.$$
Thus $g(x)$ is never irreducible if $r>1$ and this completes the proof of a).
\\

\item This follows directly from Theorem \ref{composicao} since any polynomial of the form $f(P_S(x))$ has degree divisible by $|S|=p$.
\\

\item Since $r=1$ and $1\in S$, we have $S=\F_p$. Let $m$ be any positive integer and $g(x)$ be an irreducible polynomial of degree $pm$. From Lemma \ref{composicao} we have that $g(x)\in C_S(pm)$ if, and only if, $g(x)$ is of the form $f(P_S(x))=f(x^p-x)$ where $f(x)=x^m+a_{m-1}x^{m-1}+\cdots+a_0$ is an irreducible polynomial over $\F_q$. 

From Lemma \ref{russian}, the polynomial $f(x^p-x)\in \F_q[x]$ is irreducible if, and only if, $\Tr$$_{L/K}(a_{m-1})\ne 0$ where $L=\F_q$ and $K=\F_p$. It is well known that for each $a\in K$, the equation $\Tr$$_{L/K}(x)=a$ has $\frac{q}{p}$ distinct solutions. Thus for exactly $(p-1)\frac{q}{p}$ values of $a_{m-1}\in \F_q$ we have $\Tr$$_{L/K}(-a_{m-1})\ne 0$ and any of these elements is nonzero. For a fixed $\beta\in \F_q^*$, by Lemma \ref{count}, there exist exactly 
$$\frac{1}{qm}\sum_{d|m\atop \gcd(d, p)=1}q^{m/d}\mu(d)$$
monic irreducible polynomials over $\F_q$ with degree $m$ and trace $\beta$. Thus the number of monic irreducible polynomials $f(x)\in \F_q[x]$ of degree $m$ such that $f(x^p-x)\in \F_q[x]$ is also irreducible is equal to
$$(p-1)\frac{q}{p}\cdot \frac{1}{qm}\sum_{d|m\atop \gcd(d, p)=1}q^{m/d}\mu(d)=\frac{p-1}{pm}\sum_{d|m\atop \gcd(d, p)=1}q^{m/d}\mu(d).$$
Since the polynomials $f(x^p-x)$ are all distinct when $f(x)$ runs through $I_q(m)$, we are done.
\end{enumerate}

\section{Miscellanea}
Using the results obtained in Section 2, we give alternative proofs of two interesting results concerning the action of $\GL_2(\F_q)$ on irreducible polynomials.

\subsection{Polynomials invariant under homotheties}

Given $a\in \F_q\setminus\{0, 1\}$, we are interested in counting the number of monic irreducible polynomials $g(x)$ that are  invariant under the homothety $x\mapsto ax$, i.e., $g(ax)=g(x)$. We have the following characterization:
\begin{theorem}\label{one}
Let $f(x)$ be a polynomial over $\F_q$. Then $f(ax)=f(x)$ if and only if there exists $g(x)\in \F_q[x]$ such that 
 $$f(x)=g(P_a(x)),$$ where $P_a(x)=\prod_{i=0}^{k-1}(x-a^i)=x^k-1$. In particular any polynomial invariant under the homothety $x\mapsto ax$ has degree divisible by $k=\ord(a)$.
\end{theorem}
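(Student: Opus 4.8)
The plan is to follow the pattern of Theorem~\ref{composicao}, but the situation is in fact simpler because homotheties act diagonally on the monomial basis. The first point to record is that $P_a(x)$ really is $x^k-1$: its roots are the elements $a^0,a^1,\dots,a^{k-1}$ of the cyclic group $\langle a\rangle\subset\F_q^*$, these are $k$ distinct elements each satisfying $y^k=1$, and $x^k-1$ has at most $k$ roots, so $P_a(x)=\prod_{i=0}^{k-1}(x-a^i)=x^k-1$; in particular $\deg P_a(x)=k=\ord(a)$.

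The easy direction is a one-line substitution. If $f(x)=g(P_a(x))=g(x^k-1)$, then using $a^k=1$,
$$f(ax)=g\big((ax)^k-1\big)=g\big(a^kx^k-1\big)=g(x^k-1)=f(x).$$
For the converse, the key remark is that $f(x)=g(x^k-1)$ for some $g\in\F_q[x]$ if and only if $f$ is a polynomial in $x^k$: given $h$ with $f(x)=h(x^k)$ one takes $g(y)=h(y+1)$, and conversely $h(y)=g(y-1)$. So it suffices to show that $f(ax)=f(x)$ forces every exponent appearing in $f$ to be divisible by $k$. Writing $f(x)=\sum_i c_i x^i$, the identity $f(ax)=f(x)$ becomes $\sum_i c_i(a^i-1)x^i=0$, hence $c_i(a^i-1)=0$ for all $i$; since $a^i=1$ exactly when $k\mid i$, we get $c_i=0$ whenever $k\nmid i$. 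Thus $f(x)=\sum_j c_{kj}(x^k)^j=h(x^k)$ with $h(y)=\sum_j c_{kj}y^j$, and $g(y)=h(y+1)$ gives $f(x)=g(x^k-1)=g(P_a(x))$. (Alternatively one could repeat verbatim the degree induction of Theorem~\ref{composicao} with $P_S(x)$ replaced by $P_a(x)$ and $f(x+s)=f(x)$ replaced by $f(ax)=f(x)$, after noting $P_a(ax)=P_a(x)$; but the coefficient argument above is shorter.)

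Finally, the ``in particular'' clause follows immediately from the first paragraph: if $f(x)=g(P_a(x))$ with $\deg g=m$ then $\deg f=m\deg P_a(x)=mk$, so $k=\ord(a)$ divides $\deg f$. I do not expect a genuine obstacle in this proof; the only things to get right are the identification $\langle a\rangle=\{x\in\F_q:x^k=1\}$ used to justify $P_a(x)=x^k-1$, and the trivial passage between ``polynomial in $x^k-1$'' and ``polynomial in $x^k$''. The argument is strictly easier than Theorem~\ref{composicao}.
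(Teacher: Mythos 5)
Your proof is correct, but it takes a genuinely different route from the paper's. The paper proves Theorem~\ref{one} by repeating the degree induction of Theorem~\ref{composicao}: from $f(ax)=f(x)$ one gets $f(1)=f(a^i)$ for all $i$, so $P_a(x)$ divides $f(x)-f(1)$; the quotient is again invariant (since $P_a(ax)=P_a(x)$) and of smaller degree, which closes the induction. You instead exploit the fact that the homothety $x\mapsto ax$ acts diagonally on the monomial basis: comparing coefficients in the polynomial identity $f(ax)=f(x)$ gives $c_i(a^i-1)=0$, hence $c_i=0$ unless $\ord(a)\mid i$, so $f$ is a polynomial in $x^k$, and the shift $g(y)=h(y+1)$ converts this into the stated form $g(x^k-1)=g(P_a(x))$. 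Your route is shorter and more transparent for homotheties specifically; what the paper's route buys is uniformity, since the same induction works verbatim for translations, where the map $x\mapsto x+b$ does not act diagonally on monomials and no coefficient argument is available --- indeed the paper's proof of Theorem~\ref{one} is essentially a one-line reference back to that induction. Both arguments are complete; the one point that needs to be (and is, implicitly) in order for your version to work is that $f(ax)=f(x)$ is an identity in $\F_q[x]$, not merely of functions on $\F_q$, so that coefficient comparison is legitimate.
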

\begin{proof}
Notice that if $f(ax)=f(x)$ and $k=\ord(a)$ is the order of $a\in \F_q$, then $f(1)=f(a^i)$ for all $0\le i\le k-1$; from now, the proof is quite similar to the one of Theorem \ref{composicao}.
\end{proof}

Let $N_a(nk)$ be the number of monic irreducible polynomials $f(x)\in \F_q[x]$ of degree $nk$ such that $f(ax)=f(x)$, $k=\ord(a)$. Also, let $L(n, k)$ be the number of monic irreducible polynomials of the form $F(x^k)$, where $F$ has degree $n$. Notice that, from Theorem \ref{one}, $N_a(nk)$ is exactly the number of monic irreducible polynomials of the form $f(x^k-1)$ where $f$ has degree $n$. Since $f(x^k-1)=F(x^k)$ for $F(x)=f(x-1)$ and $\{f(x-1) | f\in I_q(n)\}=I_q(n)$, the number of monic irreducible polynomials $F$ of degree $n$ for which $F(x^k)$ is also irreducible is the same for the composition $F(x^k-1)$. In particular we have proved that $N_a(nk)=L(n, k)$. Combining the previous equality with the enumeration formula for $L(n, k)$ presented in [\cite{cohen}, Theorem 3] we directly deduce the enumeration formula of Garefalakis for homotheties:

\begin{theorem}[\cite{gfq}, Theorem 4]
If $n$ is not divisible by $k=\ord(a)$, then $N_a(n)=0$ and, if $n=mk$, we have
$$N_a(mk)=\frac{\varphi(k)}{mk}\sum_{d|m\atop \gcd(d, k)=1}\mu(d)(q^{m/d}-1).$$
\end{theorem}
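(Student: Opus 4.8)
The plan is to deduce this theorem directly from the identity $N_a(nk)=L(n,k)$ established just above the statement, combined with the known enumeration formula for $L(n,k)$ due to Cohen. First I would recall that $L(n,k)$ counts monic irreducible polynomials of the form $F(x^k)$ with $\deg F=n$; by Theorem~\ref{one}, a monic irreducible $g$ of degree $nk$ satisfies $g(ax)=g(x)$ exactly when $g(x)=f(P_a(x))=f(x^k-1)$ for some monic $f$ of degree $n$, and the substitution $x\mapsto x-1$ is a bijection on $I_q(n)$ that turns the composition $f(x^k-1)$ into $F(x^k)$ with $F(x)=f(x-1)$. Hence $N_a(nk)=L(n,k)$, and in particular $N_a(n)=0$ whenever $k\nmid n$, since Theorem~\ref{one} forces the degree of any invariant irreducible polynomial to be a multiple of $k=\ord(a)$.

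Next I would invoke [\cite{cohen}, Theorem~3], which gives
$$L(m,k)=\frac{\varphi(k)}{mk}\sum_{d\mid m,\ \gcd(d,k)=1}\mu(d)\bigl(q^{m/d}-1\bigr),$$
valid under the standing hypothesis that every prime divisor of $k$ divides $q-1$ (equivalently $k\mid q-1$, which holds automatically here since $a\in\F_q^*$ has order $k$). Substituting $N_a(mk)=L(m,k)$ then yields the claimed formula verbatim. So the proof is essentially a two-line citation chain once the bijective reductions are in place.

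The only genuine subtlety, and the step I would be most careful about, is making sure the hypotheses of Cohen's theorem are satisfied and that the two counting problems really coincide under the substitution. One must check that $F\mapsto F(x-1)$ (equivalently $f\mapsto f(x+1)$) is a well-defined bijection of $I_q(n)$ onto itself preserving monicity and that it intertwines the two compositions, i.e.\ $f(x^k-1)$ is irreducible iff $F(x^k)$ is; this is immediate because $x\mapsto x-1$ is an automorphism of $\F_q[x]$ sending $x^k$ to $(x-1)^k$, hmm—more precisely one applies the automorphism $\sigma:\F_q[x]\to\F_q[x]$, $\sigma(h(x))=h(x+1)$, and notes $\sigma(f(x^k-1))=f((x+1)^k-1)$, so one actually wants to compare $f(x^k-1)$ with $f((x+1)^k-1)$, but the cleaner route is simply: irreducibility of $f(x^k-1)$ is equivalent to irreducibility of $F(x^k)$ where $F(x)=f(x-1)$, since these two polynomials in $x$ are literally equal. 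Once this identification is spelled out, nothing else is needed: the theorem is a corollary of the preceding discussion and Cohen's formula, with the vanishing case $k\nmid n$ handled by the divisibility clause of Theorem~\ref{one}.
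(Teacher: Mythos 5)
Your proposal is correct and follows essentially the same route as the paper: the identity $N_a(mk)=L(m,k)$ via the literal equality $f(x^k-1)=F(x^k)$ with $F(x)=f(x-1)$, the divisibility clause from Theorem~\ref{one} for the vanishing case, and then a direct citation of Cohen's formula for $L(m,k)$. Your extra check that $k=\ord(a)$ divides $q-1$ (so Cohen's hypotheses hold) is a worthwhile detail the paper leaves implicit.
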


\subsection{The action of $\rm{PGL}$$_2(\F_q)$ on irreducible polynomials}
Consider the projective linear group $G=\rm{PGL}$$_2(\F_q)\approx \GL_2(\F_q)/\sim$, where $A\sim B$ if $A=\lambda B$ for some $\lambda\in \F_q^*$. We are interested to find the monic irreducible polynomials $f(x)\in \F_q[x]$ such that $A\circ f=f$ for any $A\in G$. As it was shown in [\cite{general}, Proposition 4.8] in general there are no such polynomials:

\begin{theorem}
Let $f\in \F_q[x]$ be a monic irreducible polynomial of degree $n\ge 2$. Suppose that $A\circ f=f$ for all $A\in \rm{PGL}$$_2(\F_q)$. Then $n=q=2$ and $f(x)=x^2+x+1\in \F_2[x]$.
\end{theorem}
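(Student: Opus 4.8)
The plan is to translate the identity $A\circ f=f$ (for all $A$) into an invariance statement about the set of roots of $f$, and then run a short group-theoretic argument on it. Let $n=\deg f\ge 2$ and let $R\subset\F_{q^n}\setminus\F_q$ be the set of roots of $f$ --- distinct, since $f$ is irreducible and hence separable --- so that $|R|=n$. For $A=\left(\begin{smallmatrix}a&b\\c&d\end{smallmatrix}\right)\in\GL_2(\F_q)$ let $\mu_A(x)=\frac{ax+b}{cx+d}$ denote the associated Möbius map on $\mathbb P^1(\overline{\F_q})$; then $A\mapsto\mu_A$ is a homomorphism that factors through $\PGL_2(\F_q)$, and unwinding the definition of $A\circ f$ --- using that $f$ and $A\circ f$ are both monic of degree $n$ --- shows that $A\circ f=f$ if and only if $\mu_A(R)=R$. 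Thus the hypothesis says precisely that $R$ is invariant under every Möbius map with coefficients in $\F_q$. I will also use that the Galois group $\langle\phi\rangle$, $\phi\colon x\mapsto x^q$, acts on $R$ freely and transitively (because $f$ is irreducible), and that $\phi\circ\mu_A=\mu_A\circ\phi$ since the entries of $A$ lie in $\F_q$; hence the $\PGL_2(\F_q)$-action and the $\langle\phi\rangle$-action on $R$ commute.

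First I would rule out $n\ge 3$. If some $A\in\PGL_2(\F_q)$ fixes $R$ pointwise, then $\mu_A$ has at least $n\ge 3$ fixed points on $\mathbb P^1(\overline{\F_q})$ and so $\mu_A=\mathrm{id}$; hence $\PGL_2(\F_q)$ acts faithfully on $R$, giving an embedding $\PGL_2(\F_q)\hookrightarrow\mathrm{Sym}(R)\cong S_n$. The image of this embedding commutes with $\phi|_R$, which is a single $n$-cycle, and therefore lies in the centralizer of that $n$-cycle in $S_n$ --- but that centralizer is just the cyclic group of order $n$ generated by the cycle. So $\PGL_2(\F_q)$ would embed into a cyclic group, which is impossible because $\PGL_2(\F_q)$ is non-abelian (e.g.\ it acts $2$-transitively on $\mathbb P^1(\F_q)$, which has $q+1\ge 3$ points). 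Hence $n\le 2$. One could also reach this point through the results of Section~2: the translations $\left(\begin{smallmatrix}1&b\\0&1\end{smallmatrix}\right)$ force $f$ to be $\F_q$-translation invariant, so Theorem~\ref{main}(a) gives $q=p$ prime and Theorem~\ref{main}(b) gives $p\mid n$, while a homothety by a generator of $\F_p^{*}$ together with Theorem~\ref{one} gives $(p-1)\mid n$; but those divisibility constraints do not rule out all $n\ge 3$, so the centralizer argument is what actually closes the case.

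It then remains to treat $n=2$. Here $R=\{\alpha,\alpha^q\}\subset\F_{q^2}\setminus\F_q$ is a two-element set invariant under $\PGL_2(\F_q)$, so the orbit of $\alpha$ has size at most $2$. On the other hand $\alpha$ has degree exactly $2$ over $\F_q$, and solving $\mu_A(\alpha)=\alpha$ by reducing modulo the minimal polynomial of $\alpha$ shows that the stabilizer of $\alpha$ in $\PGL_2(\F_q)$ has order $q+1$, so its orbit has size $(q^3-q)/(q+1)=q(q-1)$. Therefore $q(q-1)\le 2$, which forces $q=2$. For $q=2$ the set $R$ must then be the unique such orbit $\F_4\setminus\F_2=\{\omega,\omega^2\}$, whence $f(x)=(x-\omega)(x-\omega^2)=x^2+x+1\in\F_2[x]$; conversely this $f$ is indeed fixed by all of $\PGL_2(\F_2)$, since every Möbius map over $\F_2$ permutes $\mathbb P^1(\F_2)=\{0,1,\infty\}$ and hence fixes the complementary two-point set in $\mathbb P^1(\F_4)$.

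\textbf{Main difficulty.} The one step that is not bookkeeping is the case $n\ge 3$: one has to notice that a free, transitive Galois action commuting with the $\PGL_2(\F_q)$-action on $R$ traps the image of $\PGL_2(\F_q)$ inside an abelian group, so that non-abelianness eliminates all large degrees at once (pure divisibility bounds leave, for instance, $q=2,\ n=6$ open). Everything else --- the dictionary between $A\circ f=f$ and $\mu_A(R)=R$ (where one must check that $\infty\notin R$ and that no $\mu_A$ can carry a root to $\infty$), and the orbit count on $\mathbb P^1$ used when $n=2$ --- is routine.
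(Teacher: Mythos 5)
Your proof is correct, but it takes a genuinely different route from the paper's. The paper leans on the machinery of Section~2: translation invariance plus Theorem~\ref{main}(a) forces $q=p$ prime; the composition structure $f=g(x^p-x)$ together with homothety invariance and Theorem~\ref{one} forces a zero trace unless $p=2$; and the degree is finally pinned down by a divisibility trick, observing that a root $\gamma$ satisfies $\gamma+1=\gamma^{2^j}$, so $f$ divides both $x^{2^j}+x+1$ and its reciprocal, hence divides $x^2+x+1$. You instead work entirely with the root set $R$: the faithful $\PGL_2(\F_q)$-action on $R$ commutes with the Frobenius, which acts as an $n$-cycle, so the image of $\PGL_2(\F_q)$ lands in the cyclic centralizer of that cycle in $S_n$ --- impossible for $n\ge 3$ since $\PGL_2(\F_q)$ is non-abelian --- and then orbit--stabilizer (stabilizer of a quadratic irrationality is the non-split torus of order $q+1$, so the orbit has size $q(q-1)\le 2$) forces $q=2$. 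Your argument is self-contained, bypasses Section~2 entirely, and actually proves the stronger structural fact that the stabilizer in $\PGL_2(\F_q)$ of any irreducible $f$ of degree $n$ embeds in a cyclic group of order $n$ (essentially the Stichtenoth--Topuzoglu cyclicity result), whereas the paper's route is designed to showcase the translation/homothety results it has just developed. One small point of care: $A\circ f=f$ gives $\mu_A(R)=R$, but the converse only gives $A\circ f=\lambda f$ for a scalar $\lambda$ (the action does not automatically return a monic polynomial); this is harmless here since you only use the forward direction, and over $\F_2$ the scalar is $1$ in the final verification, but the stated ``if and only if'' should be qualified.
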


Here we give an alternative proof of this fact: let $f\in \F_q[x]$ be as above. Since all $2\times 2$ matrices corresponding to translations belong to $\rm{PGL}$$_2(\F_q)$, $f$ must be an $\F_q-$translation invariant and, from Theorem \ref{main} part a), we know that this is only possible when $\F_q$ is a prime field. Thus $q=p$ prime. Also, from Theorem \ref{composicao}, we know that there exists some polynomial $g(x)$ with nonzero trace such that $f(x)=g(x^p-x)$.

Now, if $p>2$, then there is some element $a\in \F_p\setminus \{0, 1\}$ such that $k=\ord(a)>1$. Since all homotheties also belong to $\rm{PGL}$$_2(\F_q)$, it follows that $f(x)=f(ax)$ or, equivalently, $g(x^p-x)=g_2(x^p-x)$, where $g_2(x)=g(ax)$. Therefore $g(x)=g_2(x)$, i.e., $g(x)$ is also invariant under the homothety $x\mapsto ax$. From Theorem \ref{one} we have that $g(x)=h(x^k-1)$ for some $h(x)\in \F_p[x]$. Since $k>1$, a direct calculation shows that $g(x)$ has trace equal to zero, a contradiction. 

Thus $p=2$, $f(x)=g(x^2+x)$ and $\deg f(x)=2N=2\deg g(x)$. Let $$\gamma, \gamma^2, \cdots, \gamma^{2^{2N-1}}$$ be the roots of $f(x)$. Notice that $f(\gamma+1)=g(\gamma^2+\gamma)=0$ and then $\gamma+1=\gamma^{2^j}$ for some $0< j<2N$. In other words, $F(x)=x^{2^j}+x+1\in \F_2[x]$ has $\gamma$ as a root. By hypothesis, $f(x)\in \F_2[x]$ is irreducible, and thus $f(x)$ divides $F(x)$. Now, since the inversion $f^*(x)=x^nf(1/x)$ also belong to $\rm{PGL}$$_2(\F_q)$, it follows that $x^nf(1/x)=f(x)$ and so $f^*(x)=f(x)$ divides $F^*(x)=x^{2^j}+x^{2^j-1}+1$. Thus $f(x)$ divides $F(x)+x(F^*(x)+F(x))=x^2+x+1$. Since $\deg f(x)\ge 2$, the only possibility is $f(x)=x^2+x+1$. It can be easily  verified that $f(x)=x^2+x+1\in \F_2[x]$ is irreducible  and $A\circ f=f$ for all $A\in \rm{PGL}_2$$(\F_2)$.

\section{$p-$ subgroups of $\GL_2(\F_q)$}
Let $p=\Char \F_q$. If $S\subset \F_q$ is an $\F_p-$vector space of dimension $r$, then the set of translations $\{x+s; s\in S\}$ corresponds to the $p-$group $H_{S}$ of the matrices $\left(\begin{matrix}
1&s\\
0&1
\end{matrix}\right)$ where $s$ runs through $S$, $|H_S|=|S|=p^r$. This correspondence is an one-to-one correspondence between the subgroups of $H_{\F_q}$ and $\F_p-$ vector spaces of $\F_q$.

For a $p-$group $H \subset \GL_2(\F_q)$, $I_H(n)$ denotes the set of all monic irreducible polynomials in $\F_q$ of degree $n$ such that $A\circ f=f$ for all $A\in H$. In the previous correspondence, the sets $I_{H_S}(n)$ and $C_S(n)$ (defined in Section 2) are the same.

Combining the observations above with Theorem \ref{main} we are able to characterize the numbers $|I_H(n)|$:

\begin{theorem}
Let $H\subset \GL_2(\F_q)$ be any group of order $p^r$. 
\begin{enumerate}[a)]
\item If $r>1$ or $n$ is not divisible by $p$, then $|I_H(n)|=0$.
\item If $r=1$ and $n=pm$, then 
$$|I_H(n)|=\frac{p-1}{pm}\sum_{d|m\atop \gcd(d, p)=1}q^{m/d}\mu(d).$$
\end{enumerate}
\end{theorem}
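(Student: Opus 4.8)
The plan is to reduce the claim for an arbitrary $p$-subgroup $H$ to the translation case already settled in Theorem \ref{main}, by conjugating $H$ into the group $H_{\F_q}$ of translations.

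First I would locate $H$ up to conjugacy. Since $|\GL_2(\F_q)|=q(q-1)^2(q+1)$ with $q=p^k$, the exact power of $p$ dividing the order of $\GL_2(\F_q)$ is $q$, and as $|H_{\F_q}|=q$ the subgroup $H_{\F_q}$ is a Sylow $p$-subgroup. By Sylow's theorems every $p$-subgroup of $\GL_2(\F_q)$ is contained in some conjugate of $H_{\F_q}$, so there is $g\in\GL_2(\F_q)$ with $g^{-1}Hg\subseteq H_{\F_q}$. By the one-to-one correspondence between subgroups of $H_{\F_q}$ and $\F_p$-subspaces of $\F_q$ recalled at the beginning of this section, $g^{-1}Hg=H_S$ for some $\F_p$-subspace $S$; comparing cardinalities gives $\dim_{\F_p}S=r$.

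Next I would transport fixed polynomials along this conjugation to prove $|I_H(n)|=|C_S(n)|$. Writing $\mathrm{lc}(h)$ for the leading coefficient of a nonzero $h\in\F_q[x]$, I would use the three elementary facts $A\circ(B\circ f)=(AB)\circ f$, $I\circ f=f$ for the identity matrix $I$, and $A\circ(\mu h)=\mu\,(A\circ h)$ for $\mu\in\F_q^*$. These give, for a monic irreducible $f$ of degree $n$ and $H=gH_Sg^{-1}$, that $A\circ f=f$ for all $A\in H$ is equivalent to $B\circ(g^{-1}\circ f)=g^{-1}\circ f$ for all $B\in H_S$, and hence, since $g^{-1}\circ f$ is again irreducible of degree $n$, to $\mathrm{lc}(g^{-1}\circ f)^{-1}(g^{-1}\circ f)\in C_S(n)$. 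The maps $f\mapsto \mathrm{lc}(g^{-1}\circ f)^{-1}(g^{-1}\circ f)$ from $I_H(n)$ to $C_S(n)$ and $\tilde f\mapsto \mathrm{lc}(g\circ\tilde f)^{-1}(g\circ\tilde f)$ from $C_S(n)$ to $I_H(n)$ are then mutually inverse (the composite sends a monic $f$ to the monic normalization of $\mathrm{lc}(g^{-1}\circ f)^{-1}(g\circ g^{-1}\circ f)=\mathrm{lc}(g^{-1}\circ f)^{-1}f$, i.e.\ back to $f$), so $|I_H(n)|=|C_S(n)|$.

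Finally I would feed this into Theorem \ref{main} applied to the $\F_p$-space $S$ of dimension $r$: for $r>1$ one gets $|C_S(n)|=0$ for every $n$; for $r=1$ and $p\nmid n$ one gets $|C_S(n)|=0$; and for $r=1$, $n=pm$ one gets the displayed M\"obius sum. Via $|I_H(n)|=|C_S(n)|$ these are exactly parts a) and b). The substantive counting is already inside Theorem \ref{main}, so the only points needing care are the Sylow reduction and the bookkeeping of leading coefficients that makes the conjugation a genuine bijection of monic fixed polynomials; I expect the latter to be the main (though minor) technical obstacle.
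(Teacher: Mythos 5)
Your proof is correct and follows essentially the same route as the paper: identify the translation subgroup $H_{\F_q}$ as a Sylow $p$-subgroup, conjugate $H$ into it so that $g^{-1}Hg=H_S$ for an $\F_p$-subspace $S$ of dimension $r$, exhibit a leading-coefficient-normalized bijection between $I_H(n)$ and $C_S(n)$, and invoke Theorem \ref{main}. The only cosmetic difference is that you verify the two normalization maps are mutually inverse, whereas the paper checks injectivity in each direction; both yield $|I_H(n)|=|C_S(n)|$.
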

\begin{proof}
Notice that $|\GL_2(\F_q)|=q(q-1)^2(q+1)$ and the sets of all translations in $\F_q$ corresponds to a Sylow $p-$ subgroup $G$ of $\GL_2(\F_q)$. Let $H\subset \GL_2(\F_q)$ be any group of order $p^r$. We know that $H$ is contained in some Sylow $p-subgroup$ $K$ of $\GL_2(\F_q)$. Since all Sylow $p-$subgroups are conjugate, there is some $A\in \GL_2(\F_q)$ such that $A^{-1}KA=G$ and then $A^{-1}HA$ is a subgroup of $G$ and has order $p^r$.
Consider now the following map: 
$$\begin{matrix}
\tau_{H, A}:&I_H(n)&\to& I_{A^{-1}HA}(n)\\
&f(x)&\mapsto &k_{A, f, n}(A^{-1}\circ f(x)),
\end{matrix}$$
where $k_{A, f, n}$ is the only element in $\F_q$ such that $k_{A, f, n}(A^{-1}\circ f(x))$ is monic. A direct calculation shows that $\tau_{H, A}$ is well defined. Now, suppose that $\tau_{H, A}(f)=\tau_{H, A}(g)$ for some $f, g\in I_H(n)$, i.e., $k_{A, f, n}(A^{-1}\circ f(x))=k_{A, g, n}(A^{-1}\circ g(x))$. Applying $A$ we get $k_{A, f, n}f=k_{A, g, n}g$. Since $f$ and $g$ are monic irreducible we conclude that $f=g$ and then $\tau_{H, A}$ is one to one. In a similar way we can define a map from $I_{A^{-1}HA}(n)$ into $I_H(n)$. Thus $|I_{A^{-1}HA}(n)|=|I_H(n)|$.

The advantage is that $A^{-1}HA$ is a $p-$subgroup of $G$, i.e., $I_{A^{-1}HA}(n)$ is equal to $C_S(n)$ for some $\F_p-$vector space $S$ of dimension $r$. Now, the result follows from Theorem \ref{main}.
\end{proof}

\begin{center}\textbf{Acknowledgments} \end{center}
I would like to thank Daniel Panario for some helpful suggestions in the conclusion of this work and John MacQuarrie for the English corrections.


\end{document}